\theoremstyle{definition}
\newtheorem{Def}{Definition}[section]
\newtheorem{ex}[Def]{Example}
\newtheorem{qu}{Question}
\theoremstyle{plain}
\newcommand{\supp}{\operatorname{supp}}
\newcommand{\newt}{\operatorname{Newt}}
\newcommand{\rank}{\operatorname{rank}} 
\newcommand\ratto{\dashrightarrow}
\newcommand{\cM}{{\mathcal M}}
\newcommand{\R}{{\mathbb R}}
\newcommand{\pp}{\mathbb{P}}
\newcommand{\N}{{\mathbb N}}
\newcommand{\Z}{{\mathbb Z}}
\newcommand{\sph}{{\mathbb S}}
\title{A generalization of the space of complete quadrics}
\keywords{complete quadrics, $M$-convex set}
\author{Abeer Al Ahmadieh}
\address{University of Washington, Seattle, WA, USA\\ \email{aka2222@uw.edu}}
\author{Mario Kummer}
\address{Technische Universit\"at Dresden, Germany\\ \email{mario.kummer@tu-dresden.de}}
\author{Miruna-Stefana Sorea}
\address{SISSA, Trieste, Italy and RCMA Lucian Blaga University Sibiu, Romania\\ \email{msorea@sissa.it}}
\begin{document}

\maketitle

\begin{abstract}
  To any homogeneous polynomial $h$ we naturally associate a variety $\Omega_h$ which maps birationally onto the graph $\Gamma_h$ of the gradient map $\nabla h$ and which agrees with the space of complete quadrics when $h$ is the determinant of the generic symmetric matrix. We give a sufficient criterion for $\Omega_h$ being smooth which applies for example when $h$ is an elementary symmetric polynomial. In this case $\Omega_h$ is a smooth toric variety associated to a certain generalized permutohedron. We also give examples when $\Omega_h$ is not smooth. 
\end{abstract}

\section{Introduction and results}
Let $h\in\R[x_1,\ldots,x_n]$ be a homogeneous polynomial of degree $d$. We will always assume that there is no invertible linear change of coordinates $T$ such that $h(Tx)\in\R[x_1,\ldots,x_{k-1}]$. The \emph{gradient map} of $h$ is the rational map $$\nabla h:\pp^{n-1}\ratto\pp^{n-1},\, x\mapsto[\nabla h(x)]=[\frac{\partial}{\partial x_1}h(x):\cdots:\frac{\partial}{\partial x_n}h(x)].$$It is a regular map on the open subset $U\subset\pp^{n-1}$ of all points where $h$ does not vanish. Its graph $\Gamma_h$ is the Zariski closure of all pairs $(x,\nabla h(x))$ in $\pp^{n-1}\times\pp^{n-1}$ with $x\in U$. 
In this note we will study resolutions of singularities of $\Gamma_h$ for certain $h$ and thereby address Question 43 in \cite{questions}:
\setcounter{qu}{42}
\begin{qu}
 Can  we  define  a  generalization  of  the  space  of  complete  quadrics  where  the role  of the  symmetric determinant  is  played by  an  arbitrary hyperbolic  polynomial $h$?  Such a manifold could be a canonical resolution of the graph of the gradient map of $h$.
\end{qu}

One motivation for this question is the study of so-called \emph{hyperbolic exponential families} in \cite{expo} where the gradient map plays a prominent role. While the construction we present in this article is motivated by the theory of hyperbolic polynomials, hyperbolicity plays a subordinate role in the rest of the paper. Instead, other properties such as $M$-convexity will matter.

In the case when $h=\det(X)$ is the determinant of the $n\times n$ generic symmetric matrix $X$, such a resolution of singularities is given by the \emph{space of complete quadrics}. For any integer $0< i< n$ and any symmetric matrix $A\in\sph^n$ we denote by $\wedge^iA\in\sph^{\binom{n}{i}}$ the representing matrix of the linear map $\wedge^i\R^n\to\wedge^i\R^n$ induced by $A$. Note that $\wedge^iA$ is nonzero if $\det(A)\neq0$. Now the space of complete quadrics $\Omega_{\det X}$ is the Zariski closure of all tuples $([A],[\wedge^2A],\ldots,[\wedge^{n-1}A])$ in $\pp(\sph^n)\times\pp(\sph^{\binom{n}{2}})\times\cdots\times\pp(\sph^{\binom{n}{n-2}})\times\pp(\sph^n)$ with $A$ invertible. The projection of $\Omega_{\det X}$ onto the first and the last coordinate is a birational map onto $\Gamma_{\det(X)}$. Moreover, it was shown for example in \cite{laksov} that $\Omega_{\det X}$ is smooth.

In this note we will define a variety $\Omega_h$ for an arbitrary homogeneous polynomial $h\in\R[x_1,\ldots,x_n]$ together with a regular and birational map to $\Gamma_h$ which agrees with the space of complete quadrics when $h=\det(X)$ is the determinant of the generic symmetric matrix. Before we give the definition of $\Omega_h$, we recall the definition of a hyperbolic polynomial.

\begin{Def}
 A homogeneous polynomial $h\in\R[x_1,\ldots,x_n]$ is \emph{hyperbolic} with respect to $e\in\R^n$ if the univariate polynomial $h(te-v)\in\R[t]$ has only real zeros for all $v\in\R^n$. The \emph{hyperbolicity cone} of $h$ at $e$ is $$\Lambda_e(h)=\{v\in\R^n:\, h(te-v)\textrm{ has only nonnegative roots}\}.$$
\end{Def}

The prototype of a hyperbolic polynomial is the determinant of the generic symmetric matrix $\det(X)$. Indeed, since a real symmetric matrix has only real eigenvalues, the polynomial $\det(X)$ is hyperbolic with respect to the identity matrix $I$. The hyperbolicity cone of $\det(X)$ at $I$ is the cone of positive semidefinite matrices.

The entries of $\wedge^{k+1}X$ cut out the variety of symmetric matrices with rank at most $k$. For a real symmetric matrix $A$ the algebraic and geometric multiplicity of an eigenvalue agree. Thus the rank of $A$ equals to the degree of the univariate polynomial $\det(tI+A)$.

In fact the same holds true when we replace $I$ by any positive definite matrix. This shows that we can express the degeneracy locus of the rational map $\pp(\sph^n)\ratto\pp(\sph^{\binom{n}{k+1}}),\,[A]\mapsto[\wedge^{k+1}A]$ in terms of the hyperbolic rank function of $\det(X)$:

\begin{Def}
 Let $h\in\R[x_1,\ldots,x_n]$ be {hyperbolic} with respect to $e\in\R^n$. The \emph{hyperbolic rank function} of $h$ is defined as $$\rank_{h,e}:\R^n\to\N,\, v\mapsto\deg(h(e+tv)).$$
\end{Def}

It was shown in \cite[Lemma~4.4]{Bra11} that $\rank_{h,e}=\rank_{h,a}$ for any $a\in\textrm{int}(\Lambda_e(h))$. 
We let $d=\deg(h)$, $0\leq k<d$ and $v\in\R^n$. Then we have $\rank_{h,e}(v)\leq d-k-1$ if and only if all $k$th order partial derivatives $\frac{\partial^{k}h}{\partial x_{i_1}\cdots\partial x_{i_k}}$ of $h$ vanish in $v$. Lets denote by $D^k_{1},\ldots, D^k_{m_k}$ a basis of the span of all $k$th order partial derivatives of $h$. We consider the rational map \[{\Delta}h:\pp^{n-1}\ratto\pp^{m_1-1}\times\cdots\times\pp^{m_{d-1}-1},\]\[[x]\mapsto([D^1_1(x):\cdots:D^1_{m_1}(x)],\ldots,[D^{d-1}_1(x):\cdots:D^{d-1}_{m_{d-1}}(x)]).\]
We define the variety $\Omega_h$ to be the normalisation of the image of this rational map. 
The projection on the first and the last coordinate gives a birational morphism $\omega_h:\Omega_h\to\Gamma_h$. Moreover, when $h=\det(X)$ is the determinant of the generic symmetric matrix, then $\Omega_{\det(X)}$ is isomorphic to the space of complete quadrics as defined above and thus $\Omega_{\det(X)}$ is smooth in that case. 

Another important example for hyperbolic polynomials are the elementary symmetric polynomials.

\begin{thm}\label{thm:elems}
 Let $\sigma_{d,n}$ be the elementary symmetric polynomial of degree $d$ in $n$ variables. Then $\Omega_{\sigma_{d,n}}$ is a smooth toric variety. 
\end{thm}

It is well-known that $\sigma_{d,n}$ is hyperbolic with respect to every point in the positive orthant. Such polynomials are called \emph{stable}.  The theory of stable polynomials connects nicely to discrete convex analysis \cite{murota}. We denote by $\delta_k\in\Z^n$  the $k$th unit vector.

\begin{Def}\label{def:mconv}
 A nonempty set of integer points $B\subset\Z^n$ is called \emph{$M$-convex} if for all $x,y\in B$ and every index $i$ with $x_i>y_i$, there exists an index $j$ with $x_j<y_j$ such that $x-\delta_i+\delta_j\in B$ and $y+\delta_i-\delta_j\in B$.
\end{Def}

\begin{thm}[Theorem~3.2 in \cite{interl}]
 Let $h\in\R[x_1,\ldots,x_n]$ be a homogeneous stable polynomial. Then the support of $h$ is $M$-convex.
\end{thm}

In Section \ref{sec:results} we will give a sufficient criterion for $\Omega_h$ being smooth when the support of $h$ is $M$-convex. We will apply this criterion for proving Theorem \ref{thm:elems}.

However, there are also stable (and thus hyperbolic) polynomials $h$ for which $\Omega_h$ is not smooth.

\begin{ex}\label{ex:omegasing}
 Consider the polynomial $$h=w (2 x+4 y+7 z) (4 x+2 y+7 z)$$  $$+x^3+11 x^2 y+11 x y^2+y^3+15 x^2 z+46 x y z+15 y^2 z+37 x z^2+37 y z^2+21 z^3.$$One can check that $h$ is stable. Further, using the the computer algebra system {\tt Macaulay2}~\cite{M2}, one checks that $\Omega_h$ is not smooth. 
\end{ex}

\section{A simple polymatroid}
In this section we prepare the proof of Theorem \ref{thm:elems}. Recall that a \emph{polymatroid} on the ground set $[n]=\{1,\ldots,n\}$ is a function $r:2^{[n]}\to\Z_{\geq0}$ such that for all $S,T\subset[n]$ we have:
\begin{enumerate}
 \item $r(S)\leq r(T)$ if $S\subset T$,
 \item $r(S\cup T)+r(S\cap T)\leq r(S)+r(T)$, and
 \item $r(\emptyset)=0$.
\end{enumerate}
If further $r(\{i\})\leq1$ for all $i\in[n]$, then $r$ is called a \emph{matroid}.
The second property is usually called \emph{submodularity}. We call the number $d=r([n])$ the \emph{rank} of $r$. See \cite[Chapter 18]{welsh} for a general reference on the theory of polymatroids. 

\begin{ex}
 Let $h\in\R[x_1,\ldots,x_n]$ be {hyperbolic} with respect to $e\in\R^n$. The function that sends $S\subset[n]$ to $\rank_{h,e}(\sum_{i\in S}\delta_i)$ is a polymatroid \cite[Proposition~3.2]{Bra11}.
\end{ex}

For all $0\leq k\leq d$ the \emph{$k$th truncation} $r_k$ is the polymatroid defined by $$r_{k}(S)=\min(d-k,r(S))$$for all $S\subset[n]$. It follows directly from the definition that the sum of polymatroids is again a polymatroid. We define the following polymatroid $$\overline{r}=r_0+\ldots+r_d.$$
To every polymatroid $r$ one associates the \emph{independence polytope} $$P(r)=\{x\in(\R_{\geq0})^n:\, \sum_{i\in S}x_i\leq r(S)\textrm{ for all }S\subset[n]\}.$$
The goal of this section is to show that for every polymatroid $r$ on $[n]$ the polytope $P(\overline{r})$ is simple. A characterization of polymatroids, whose independence polytope is simple, was given in \cite[Theorem 2]{girlich}. The following lemmata will enable us to apply this criterion.

\begin{Def}
 Let $r$ be a polymatroid on $[n]$. We say that a subset $S\subset[n]$ is \emph{$r$-inseparable} if for every two disjoint and nonempty subsets $S_1,S_2\subset[n]$ with $S=S_1\cup S_2$ we have $r(S)<r(S_1)+r(S_2)$.
\end{Def}

\begin{rem}
 If $|S|\leq1$, then $S$ is $r$-inseparable for every polymatroid $r$.
\end{rem}

\begin{lemma}\label{lem:sepsum}
 Let $r,r'$ be polymatroids on $[n]$. If $S\subset[n]$ is $r$-inseparable, then $S$ is $(r+r')$-inseparable.
\end{lemma}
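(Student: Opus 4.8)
The statement is essentially immediate from the definitions, so the "plan" is really just to unwind them carefully; the only point requiring a moment's thought is that polymatroid rank functions are nonnegative and monotone, which keeps the extra term $r'$ from ever decreasing a subadditivity defect.

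First I would fix disjoint nonempty subsets $S_1,S_2\subset[n]$ with $S=S_1\cup S_2$, assuming $S$ is $r$-inseparable. By definition of $r$-inseparability applied to this decomposition, $r(S)<r(S_1)+r(S_2)$, i.e. $r(S_1)+r(S_2)-r(S)\geq 1$ since all values are integers. Next I would invoke submodularity of $r'$ (property (2) in the definition of a polymatroid) together with $S_1\cap S_2=\emptyset$ and $r'(\emptyset)=0$: this gives $r'(S)=r'(S_1\cup S_2)\leq r'(S_1)+r'(S_2)$, hence $r'(S_1)+r'(S_2)-r'(S)\geq 0$. Adding the two inequalities yields
\[
(r+r')(S_1)+(r+r')(S_2)-(r+r')(S)\;=\;\bigl(r(S_1)+r(S_2)-r(S)\bigr)+\bigl(r'(S_1)+r'(S_2)-r'(S)\bigr)\;\geq\;1>0,
\]
which is exactly the condition $(r+r')(S)<(r+r')(S_1)+(r+r')(S_2)$. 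Since $S_1,S_2$ were an arbitrary such decomposition of $S$, the set $S$ is $(r+r')$-inseparable.

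There is really no obstacle here beyond bookkeeping; the one thing to be careful about is to use submodularity of $r'$ in the correct direction (the defect $r'(S_1)+r'(S_2)-r'(S)$ is $\geq 0$, not $\leq 0$), and to note that it suffices to have this defect nonnegative rather than strictly positive — the strict inequality is supplied entirely by $r$. I would also remark, for use in the sequel, that by symmetry the same argument shows $S$ is $(r'+r)$-inseparable and, by induction, inseparable for any finite sum of polymatroids one of whose summands has $S$ inseparable; in particular, if $S$ is $r_k$-inseparable for some single truncation $r_k$, then $S$ is $\overline r$-inseparable.
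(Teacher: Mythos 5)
Your proof is correct and takes essentially the same route as the paper: both hinge on submodularity of $r'$ applied to the disjoint decomposition $S=S_1\cup S_2$ together with $r'(\emptyset)=0$, yours stated directly and the paper's as a contrapositive. One small nit: the property of $r'$ that actually matters is submodularity plus $r'(\emptyset)=0$, not nonnegativity or monotonicity as your opening line suggests, and the appeal to integrality is superfluous, since adding a strict inequality to a weak one already yields a strict inequality.
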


\begin{proof}
 Assume that $S$ is not $(r+r')$-inseparable.
 Let $\emptyset\neq S_1,S_2\subset[n]$ such that $S$ is the disjoint union of $S_1$ and $S_2$. If $r(S)+r'(S)\geq r(S_1)+r'(S_1)+r(S_2)+r'(S_2)$, then by submodularity of $r'$ we get $r(S)\geq r(S_1)+r(S_2)$ which shows that $S$ is not $r$-inseparable.
\end{proof}

\begin{rem}\label{rem:loop}
 Let $|S|\geq2$ and let $x\in[n]$ be a \emph{loop} of $r$, i.e. $r(\{x\})=0$. If $x\in S$, then $S$ is not $r$-inseparable: $r(S)=r(S\setminus\{x\})+r(\{x\}).$
\end{rem}

\begin{lemma}\label{lem:loop}
 Let $S\subset[n]$ with $|S|\geq2$ and $r$ a polymatroid on $[n]$. Then $S$ is $\overline{r}$-inseparable if and only if $S$ does not contain a loop of $r$.
\end{lemma}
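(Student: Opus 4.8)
The plan is to prove the two implications separately. The ``only if'' direction is essentially immediate from Remark~\ref{rem:loop}, while the ``if'' direction needs a short computation with the truncations $r_k$ entering the definition of $\overline r = r_0 + \cdots + r_d$.

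For the direction ``$S$ is $\overline r$-inseparable $\Rightarrow$ $S$ contains no loop of $r$'' I would argue by contraposition. Suppose $x\in S$ is a loop of $r$, i.e.\ $r(\{x\})=0$. Then $x$ is also a loop of $\overline r$, since $\overline r(\{x\})=\sum_{k=0}^d r_k(\{x\})=\sum_{k=0}^d\min(d-k,\,0)=0$. As $|S|\geq 2$, Remark~\ref{rem:loop} applied to the polymatroid $\overline r$ shows that $S$ is not $\overline r$-inseparable.

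For the converse, assume no element of $S$ is a loop of $r$, and let $S=S_1\dotcup S_2$ be an arbitrary partition into disjoint nonempty subsets; the goal is $\overline r(S)<\overline r(S_1)+\overline r(S_2)$. First, since each $r_k$ is a polymatroid and $S_1\cap S_2=\emptyset$, submodularity gives $r_k(S)=r_k(S_1\cup S_2)\leq r_k(S_1)+r_k(S_2)$ for every $k\in\{0,\ldots,d\}$, so summing over $k$ already yields the non-strict inequality $\overline r(S)\leq\overline r(S_1)+\overline r(S_2)$. It remains to exhibit a single index $k$ at which the inequality is strict, and this is where the no-loop hypothesis enters: every nonempty subset $T\subset S$ satisfies $r(T)\geq r(\{x\})\geq 1$ for any $x\in T$, so $r(S),r(S_1),r(S_2)\geq 1$ and in particular $d=r([n])\geq 1$, making $k=d-1$ a legal index. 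Since $r_{d-1}(T)=\min(1,r(T))$, we get $r_{d-1}(S)=1$ whereas $r_{d-1}(S_1)+r_{d-1}(S_2)=1+1=2$. Adding this strict inequality to the termwise inequalities at the remaining indices gives $\overline r(S)<\overline r(S_1)+\overline r(S_2)$, and since the partition was arbitrary, $S$ is $\overline r$-inseparable.

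I do not expect a genuine obstacle here; the only thing to be careful about is checking that the witnessing index $k=d-1$ really lies in $\{0,\ldots,d\}$, and the no-loop hypothesis is exactly what guarantees this (via $d\geq r(S)\geq 1$) as well as forcing the truncated ranks of $S_1$ and $S_2$ up to $1$. As an alternative to the direct computation in the converse, one could note that $S$ is $r_{d-1}$-inseparable as soon as it has no loop and $|S|\geq 2$ (the inequality $1<1+1$ again), and then invoke Lemma~\ref{lem:sepsum} with the polymatroid $r_{d-1}$ and the polymatroid $\sum_{k\neq d-1} r_k$ to transfer inseparability to $\overline r$; but the direct argument seems shorter.
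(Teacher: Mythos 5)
Your proof is correct and takes essentially the same route as the paper: identify $k=d-1$ as the truncation level where strict inequality $r_{d-1}(S)=1<2=r_{d-1}(S_1)+r_{d-1}(S_2)$ holds, then propagate it to $\overline r$. The paper handles the propagation step by invoking Lemma~\ref{lem:sepsum} directly, whereas you inline that argument (and mention the Lemma~\ref{lem:sepsum} version as an alternative), so the two are substantively identical.
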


\begin{proof}
 We first observe that $x\in[n]$ is a loop of $r$ if and only if $x$ is a loop of all truncations $r_k$ and thus of $\overline{r}$. Now the ``only if'' direction follows from Remark \ref{rem:loop}. For the ``if'' direction assume that $S$ does not contain any loop of $r$. By Lemma \ref{lem:sepsum} it suffices to show that $S$ is $r_{d-1}$-inseparable. This is clear since $$r_{d-1}(S)=1<2=r_{d-1}(S_1)+r_{d-1}(S_2)$$for all nonempty subsets $S_1,S_2\subset S$.
\end{proof}

\begin{lemma}\label{lem:simplecondition1}
 Let $r$ be a polymatroid on $[n]$ of rank $d$. Let $S,T\subset[n]$ such that \begin{enumerate}
 \item $S\cap T\neq\emptyset$, $S\not\subset T$, $T\not\subset S$,
 \item $\overline{r}(S\cap T)<\overline{r}(S)$, $\overline{r}(S\cap T)<\overline{r}(T)$, and
 \item the sets $S,T,S\cup T$ are $\overline{r}$-inseparable.
 \end{enumerate}
 Then $\overline{r}(S\cap T)+\overline{r}(S\cup T)<\overline{r}(S)+\overline{r}(T)$.
\end{lemma}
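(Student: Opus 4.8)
The plan is to reduce the strict inequality to the same inequality for a single truncation of $r$. Since $\overline{r}=r_0+\cdots+r_d$ is a sum of polymatroids, each summand $r_k$ is itself submodular, so for every $k$ we have $r_k(S\cap T)+r_k(S\cup T)\le r_k(S)+r_k(T)$; summing over $k$ already gives the non-strict inequality $\overline{r}(S\cap T)+\overline{r}(S\cup T)\le\overline{r}(S)+\overline{r}(T)$. Hence it suffices to exhibit one index $k\in\{0,\ldots,d\}$ for which this inequality is strict at the level of $r_k$.

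Before doing so I would unwind hypothesis (2). The value $\overline{r}(A)=\sum_{k=0}^{d}\min(d-k,r(A))$ depends only on the integer $r(A)$, and it is a weakly increasing function of it; in particular, if $A\subseteq B$ and $r(A)=r(B)$ then $\overline{r}(A)=\overline{r}(B)$. Since $S\cap T\subseteq S$ and $S\cap T\subseteq T$, hypothesis (2) therefore forces the strict inequalities $r(S\cap T)<r(S)$ and $r(S\cap T)<r(T)$ between the ranks of the original polymatroid.

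Now set $a=r(S\cap T)$ and take $k=d-a-1$. This is a legal truncation index: $a\ge 0$ gives $k\le d-1\le d$, and $a<r(S)\le r([n])=d$ gives $a\le d-1$, hence $k\ge 0$. For this $k$ one has $d-k=a+1$, and the integer inequalities $r(S\cap T)=a<a+1\le r(S)$, $r(T)\le r(S\cup T)$ (the last using monotonicity of $r$) show that the truncation $r_k(\cdot)=\min(d-k,r(\cdot))$ takes the values $r_k(S\cap T)=a$ and $r_k(S)=r_k(T)=r_k(S\cup T)=a+1$. Consequently $r_k(S\cap T)+r_k(S\cup T)=2a+1<2a+2=r_k(S)+r_k(T)$, which combined with the submodularity inequalities for the remaining truncations yields the claimed strict inequality.

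There is no serious obstacle once one sees that $k=d-1-r(S\cap T)$ is the truncation level at which the small set $S\cap T$ becomes \emph{deficient} while the three larger sets $S$, $T$, $S\cup T$ do not; the only points requiring care are rephrasing hypothesis (2) as a statement about $r$ rather than about $\overline{r}$, and checking that this $k$ lies in $\{0,\ldots,d\}$. I note that hypotheses (1) and (3) are not actually used in this argument; presumably they are recorded in the statement because this is the form in which the lemma is applied together with the simplicity criterion of \cite{girlich}.
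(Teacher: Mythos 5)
Your argument is correct and takes a genuinely different and more direct route than the paper's. The paper proves the lemma by induction on the rank $d$, using the decomposition $\overline{r}=r+\overline{r_1}$ together with the loop characterization of $\overline{r}$-inseparability (its Lemma~\ref{lem:loop}); hypotheses (1) and (3) enter essentially in that inductive bookkeeping. You instead observe that $\overline{r}(A)=\sum_{k=0}^{d}\min(d-k,r(A))$ is a function of the single integer $r(A)$ (in fact strictly increasing on $\{0,\ldots,d\}$, since consecutive differences equal $d-m$), so (2) translates into $r(S\cap T)<r(S)$ and $r(S\cap T)<r(T)$, and then you pinpoint the one truncation level $k=d-1-r(S\cap T)$ at which $S\cap T$ drops below the cap while $S$, $T$, $S\cup T$ all hit it; summing this strict inequality with the non-strict submodular inequalities for the other truncations finishes the proof. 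Your computation that $k$ lies in range and that $r_k$ takes the values $a$ and $a+1$ as claimed is correct, and it relies only on monotonicity of $r$ and on each truncation $r_k$ being submodular, both of which the paper already assumes in calling $r_k$ a polymatroid. Your remark that (1) and (3) are superfluous is also accurate; the paper's proof does invoke them, so your version strengthens the lemma slightly while remaining compatible with its use in the criterion of Girlich--Kovalev. The one cosmetic quibble: you say the map $m\mapsto\overline{r}$ is ``weakly increasing,'' when in fact it is strictly increasing on the relevant range; your argument only needs the weak version, so nothing is wrong, but the sharper statement is available and makes the reformulation of (2) immediate.
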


\begin{proof}
 We proceed by induction on $d$. We first show that for $d\leq1$ there are no subsets $S,T\subset[n]$ satisfying $(1),(2),(3)$. If $d=0$, then $r$ and $\overline{r}$ are both the zero function. Thus there are no subsets $S,T\subset[n]$ satisfying $(2)$. If $d=1$, we still have $r=\overline{r}$. Condition $(1)$ implies that $|S|\geq2$. Thus $(3)$ and Lemma \ref{lem:loop} imply that $S$ contains no loop of $r$. Therefore, we have $r(S)=r(S\cap T)=1$ contradicting $(2)$.
 
 Now let $d>1$ and assume that the claim is true for the polymatroid $r_1$ of rank $d-1$. We assume for the sake of a contradiction that $S,T\subset[n]$ satisfy $(1),(2),(3)$ but $\overline{r}(S\cap T)+\overline{r}(S\cup T)=\overline{r}(S)+\overline{r}(T)$. Again $(1)$ implies that $|S|\geq2$. So by $(3)$ and Lemma \ref{lem:loop} the set $S\cup T$ contains no loop of $r$. Since $d>1$, this implies that $S\cup T$ contains no loop of $r_1$ as well. Thus again by Lemma \ref{lem:loop} the sets $S,T,S\cup T$ are $\overline{r_1}$-inseparable. By submodularity and because $\overline{r}=r+\overline{r_1}$ we have $$r(S)+r(T)=r(S\cap T)+r(S\cup T)\textrm{ and }\overline{r_1}(S)+\overline{r_1}(T)=\overline{r_1}(S\cap T)+\overline{r_1}(S\cup T).$$So by induction hypothesis we have without loss of generality that $\overline{r_1}(S\cap T)=\overline{r_1}(S)$, which implies $r_1(S\cap T)=r_1(S)$, and $r(S\cap T)<r(S)$. Thus we must have $r(S)=d$ and the equation $$d+r(T)=r(S)+r(T)=r(S\cap T)+r(S\cup T)=r(S\cap T)+d$$ implies that $r(T)=r(S\cap T)$. This in turn shows that $\overline{r}(T)=\overline{r}(S\cap T)$ contradicting $(2)$.
\end{proof}

\begin{lemma}\label{lem:simplecondition2}
 Let $r$ be a polymatroid on $[n]$ of rank $d$. Let $k\geq2$ and $S_1,\ldots,S_k\subset[n]$ nonempty and pairwise disjoint. Let $S\subset[n]$ $\overline{r}$-inseparable with $\cup_{i=1}^k S_i\subset S$ and $\overline{r}(\cup_{i=1}^k S_i)= \overline{r}(S)$. Then $\overline{r}(\cup_{i=1}^k S_i)<\sum_{i=1}^k\overline{r}(S_i)$.
\end{lemma}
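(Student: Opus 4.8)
The plan is to reduce the statement to the $\overline{r}$-inseparability of the union $U:=\bigcup_{i=1}^kS_i$ itself, after which the desired strict inequality falls out of the definition of inseparability together with submodularity. First I would record that, since $k\geq 2$ and the $S_i$ are nonempty and pairwise disjoint, $|U|\geq 2$, and hence $|S|\geq 2$; this makes Lemma \ref{lem:loop} applicable to both $U$ and $S$. Applying it to $S$, the hypothesis that $S$ is $\overline{r}$-inseparable forces $S$ to contain no loop of $r$; since $U\subseteq S$, the set $U$ contains no loop of $r$ either, so Lemma \ref{lem:loop} applied to $U$ shows that $U$ is $\overline{r}$-inseparable. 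Notably this uses neither the hypothesis $\overline{r}(U)=\overline{r}(S)$ nor Lemma \ref{lem:simplecondition1}: the loop characterisation of $\overline{r}$-inseparability does all the work, and in particular shows that $\overline{r}$-inseparability is inherited by subsets of size at least two.

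Next I would split $U$ as the disjoint union of $A:=S_1\cup\cdots\cup S_{k-1}$ and $B:=S_k$, both of which are nonempty. Since $U$ is $\overline{r}$-inseparable, this gives $\overline{r}(U)<\overline{r}(A)+\overline{r}(B)$. Finally, iterating submodularity of $\overline{r}$ along the pairwise disjoint sets $S_1,\ldots,S_{k-1}$ — at each step the relevant intersection is empty, so $\overline{r}(X\cup S_j)\leq\overline{r}(X)+\overline{r}(S_j)$ — gives $\overline{r}(A)\leq\sum_{i=1}^{k-1}\overline{r}(S_i)$, and combining the two bounds yields $\overline{r}\big(\bigcup_{i=1}^kS_i\big)<\sum_{i=1}^k\overline{r}(S_i)$, as claimed.

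The only genuine obstacle is conceptual and lies in the first step: one should resist the temptation to mimic the induction on $d$ used for Lemma \ref{lem:simplecondition1} — which would require a case distinction according to whether $U=S$ or $U\subsetneq S$, with the equality $\overline{r}(U)=\overline{r}(S)$ used to bridge the two cases — and instead notice that Lemma \ref{lem:loop} already supplies the $\overline{r}$-inseparability of $U$ directly. Once that observation is made, the remainder is a two-line computation.
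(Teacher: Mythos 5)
Your proof is correct, and it is genuinely simpler and more direct than the paper's. The paper mirrors the proof of Lemma~\ref{lem:simplecondition1}: it does induction on $d$, writing $\overline{r}=r_0+\overline{r_1}$, and uses the hypothesis $\overline{r}\big(\bigcup_i S_i\big)=\overline{r}(S)$ together with monotonicity to carry the equality down to $\overline{r_1}$, so that the inductive hypothesis applies; the strict inequality then comes from the induction while the $r_0$-part contributes only the weak inequality via submodularity. You instead notice that Lemma~\ref{lem:loop} characterises $\overline{r}$-inseparability of any set of size at least two purely by the absence of loops of $r$, so inseparability passes to subsets of size at least two. This makes $U=\bigcup_i S_i$ itself $\overline{r}$-inseparable, and then $\overline{r}(U)<\overline{r}(S_1\cup\cdots\cup S_{k-1})+\overline{r}(S_k)\leq\sum_i\overline{r}(S_i)$ follows from the definition of inseparability plus iterated submodularity for disjoint sets. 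Your route needs no induction and, as you observe, never uses the hypothesis $\overline{r}\big(\bigcup_i S_i\big)=\overline{r}(S)$, so it in fact proves a slightly stronger statement. The trade-off is negligible — both arguments rest on Lemma~\ref{lem:loop} as the crucial input — but yours makes the mechanism more transparent and avoids duplicating the inductive scaffolding from the previous lemma.
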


\begin{proof}
 We first observe that since $|S|\geq2$ and $S$ is $\overline{r}$-inseparable, Lemma \ref{lem:loop} implies that $S$ contains no loop of $r$. Thus each $S_i$ also contains no loop of $r$.

 We proceed again by induction on $d$. If $d=0$, then there every element is a loop contradicting the assumptions. If $d=1$, then  we have \[\overline{r}(\cup_{i=1}^k S_i)=1<2\leq k=\sum_{i=1}^k\overline{r}(S_i).\]
 Now let $d>1$. Then because $S$ contains no loop of $r$, it also contains no loop of $r_1$ which shows that $S$ is $\overline{r_1}$-inseparable. Further $\cup_{i=1}^k S_i\subset S$ and $\overline{r}(\cup_{i=1}^k S_i)= \overline{r}(S)$ imply that $\overline{r_1}(\cup_{i=1}^k S_i)= \overline{r_1}(S)$. By induction hypothesis we have $\overline{r_1}(\cup_{i=1}^k S_i)<\sum_{i=1}^k\overline{r_1}(S_i)$ which implies the claim because $r=r_0$ is submodular.
\end{proof}

\begin{thm}\label{thm:simple}
 Let $r$ be a polymatroid on $[n]$. Then the polytope $P(\overline{r})$ is simple.
\end{thm}

\begin{proof}
 A characterization of simple independence polytopes of polymatroids was given in \cite[Theorem 2]{girlich}. It says that the polytope $P(\overline{r})$ is simple if and only if the conclusion of the two preceding Lemmas \ref{lem:simplecondition1} and \ref{lem:simplecondition2} holds.
\end{proof}

We will be interested in the base polytope of a polymatroid rather than in its independent polytope. If $r:2^{[n]}\to\R$ is a submodular function, then its \emph{base polytope} $B(r)$ is defined as
$$B(r)=\{x\in(\R_{\geq0})^n:\, \sum_{i\in S}x_i\leq r(S)\textrm{ for all }S\subset[n]\textrm{ and }\sum_{i=1}^nx_i=r([n])\}.$$

\begin{cor}\label{cor:simple}
 Let $r$ be a polymatroid on $[n]$. Then the polytope $B(\overline{r})$ is simple.
\end{cor}

\begin{proof}
 Clearly, the base polytope is a face of the independence polytope. Thus the claim follows from Theorem \ref{thm:simple}.
\end{proof}

\begin{rem}\label{cor:minkowski}
 Taking the sum of submodular functions is compatible with taking the Minkowski sum of their base polytopes \cite[Theorem~4.23(1)]{murota}. Thus if $r$ is a polymatroid on $[n]$ of rank $d$, then we have that $B(\overline{r})=B(r_0)+\ldots+B(r_d)$.
\end{rem}

We end this section with describing the polytope $B(\overline{r})$ explicitely when $r$ is the rank function of a matroid. We start with the following easy lemma.

\begin{lemma}
 Let $r=r_\cM$ be the rank function of a matroid $\cM$ of rank $d$ on $[n]$. There is a basis $B$ of $\cM$ such that for all $i\in[n]$ we have $$a_i:=r([i])-r([i-1])=\begin{cases}1&\textrm{ if } i\in B\\0&\textrm{ otherwise.}\end{cases}$$
\end{lemma}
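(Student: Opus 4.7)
The plan is to construct the basis $B$ explicitly from the ranks themselves, namely
$$B := \{i \in [n] : r([i]) > r([i-1])\},$$
and then verify that $B$ is a basis of $\cM$ for which the displayed formula becomes a tautology.

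First I would check that each increment $a_i = r([i]) - r([i-1])$ lies in $\{0,1\}$. Monotonicity gives $a_i \geq 0$, while submodularity applied to $[i-1]$ and $\{i\}$ together with the matroid hypothesis $r(\{i\}) \leq 1$ gives $a_i \leq 1$. So $a_i \in \{0,1\}$, and by construction $a_i = 1$ exactly when $i \in B$, which is precisely the claimed formula. Telescoping then fixes the cardinality:
$$|B| = \sum_{i=1}^n a_i = r([n]) - r(\emptyset) = d.$$

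The main step — and the only non-formal one — is showing that $B$ is independent, for then $|B| = d = \rank(\cM)$ will force $B$ to be a basis. I would argue by contradiction: if $B$ is dependent it contains a circuit $C$; let $i$ be the largest index of $C$. Then $C \setminus \{i\}$ is independent with $i$ in its closure, and since $C \setminus \{i\} \subseteq [i-1]$ this yields $r([i-1] \cup \{i\}) = r([i-1])$, i.e.\ $a_i = 0$, contradicting $i \in B$. Conceptually this is nothing more than the statement that the greedy algorithm run in the order $1, 2, \ldots, n$ produces an independent set in any matroid, so I do not expect any real obstacle here; the novelty, if any, is only in the packaging of the statement and the observation that the natural greedy output serves as the desired basis $B$.
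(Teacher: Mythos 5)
Your proposal is correct and follows essentially the same route as the paper: both take $B$ to be the set of indices where the rank of the initial segments jumps, observe $a_i\in\{0,1\}$ and $|B|=\sum_{i=1}^n a_i=d$, and then verify that $B$ is independent, hence a basis. The only difference is in that last verification — the paper proves independence by induction on the elements $k_1<\cdots<k_d$ of $B$, using the augmentation property together with $r([k_m-1])=m-1$, whereas you argue by contradiction via a maximal element of a circuit and the closure operator — and both arguments are routine and valid.
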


\begin{proof}
 Since $\cM$ is a matroid of rank $d$, we have $a_i\in\{0,1\}$ and $B=\{i\in[n]:\, a_i=1\}$ has cardinality $d$. Let $k_1<\cdots<k_d$ the elements of $B$. We show by induction on $m$ that $I_m:=\{k_1,\ldots,k_m\}$ is independent. Assume that $I_{m-1}$ is independent. Since $r([k_m])=m$, there is an independent subset $I$ of $[k_m]$ of cardinality $m$. Thus there is an element $e\in I\setminus I_{m-1}$ such that $I_{m-1}\cup\{e\}$ is independent. Since $r([k_m-1])=m-1$, we must have $e=k_m$.
\end{proof}

\begin{prop}\label{prop:matroidpolytope}
 Let $r=r_\cM$ be the rank function of a matroid $\cM$ of rank $d$ on $[n]$. The vertices of the polytope $B(\overline{r})$ are exactly those points $v\in\R^n$ whose support is a basis of $\cM$ and whose nonzero entries comprise the numbers $1,\ldots,d$.
\end{prop}

\begin{proof}
 Let $v$ be a vertex of $B(\overline{r})$. Then $v$ is also a vertex of $P(\overline{r})$. Then by \cite[\S18.4, Theorem 1]{welsh} there exists an integer $0\leq k\leq n$ and a bijection $\pi:[n]\to[n]$ such that $v_{\pi(j)}=\overline{r}(\{\pi(1),\ldots,\pi(j)\})-\overline{r}(\{\pi(1),\ldots,\pi(j-1)\})$ if $j\in[k]$ and $v_{\pi(j)}=0$ otherwise. Since $v$ lies in $B(\overline{r})$, we can assume without loss of generality that $k=n$. Further, after relabeling, we can assume that $\pi$ is the identity map. Now let $B=\{k_1,\ldots,k_d\}$ with $k_1<\cdots<k_d$ be the basis of $\cM$ as in the preceding lemma. Then we have $v_j=d-m+1$ if $j=k_m$ and zero if $j\not\in B$. This shows that $v$ is of the desired form.

Conversely, take $v\in\R^n$ whose support $\{i_1,\ldots,i_d\}$ is a basis of $\cM$ such that $v_{i_j}=d-j+1$ for $j=1,\ldots,d$. Since $v_{i_j}=\overline{r}(\{i_1,\ldots,i_j\})-\overline{r}(\{i_1,\ldots,i_{j-1}\})$ for $j=1,\ldots,d$ and all other entries of $v$ are zero, it is a vertex of $P(\overline{r})$ by \cite[\S18.4, Theorem 1]{welsh}. One checks that $\sum_{i=1}^n v_i=\bar{r}([n])$, so $v$ is  a vertex of $B(\overline{r})$.
\end{proof}

\begin{ex}\label{ex:matroidpolytope}
  For instance when $\cM=U(2,4)$ is the uniform matroid on $4$ elements of rank $2$, then $B(r_1)$ is the standard $3$-simplex in $\R^4$ and $B(r_0)$ is the octahedron whose vertices are the permutations of $( 1 ,1, 0, 0)$ (and thus is not simple). The Minkowski sum $B(\overline{r})=B(r_0)+B(r_1)$ is simple by Corollary \ref{cor:simple}. It is the truncated tetrahedron whose vertices are the permutations of $( 2 ,1, 0, 0)$.
\end{ex}

\begin{figure}[h]\centering
\begin{subfigure}[b]{0.30\textwidth}

\begin{tikzpicture}[x  = {(0.9cm,-0.076cm)},
                    y  = {(-0.06cm,0.95cm)},
                    z  = {(-0.44cm,-0.29cm)},
                    scale = 1.8,
                    color = {lightgray}]

  \coordinate (v0_p) at (0, 0, 0);
  \coordinate (v1_p) at (1, 0, 0);
  \coordinate (v2_p) at (0, 1, 0);
  \coordinate (v3_p) at (0, 0, 1);

  \definecolor{edgecolor_p}{rgb}{ 0,0,0 }
  \tikzstyle{facestyle_p} = [fill=none, fill opacity=0.85, preaction={draw=white, line cap=round, line width=1.5 pt}, draw=edgecolor_p, line width=1 pt, line cap=round, line join=round]

  \draw[facestyle_p] (v1_p) -- (v3_p) -- (v0_p) -- (v1_p) -- cycle;
  \draw[facestyle_p] (v0_p) -- (v3_p) -- (v2_p) -- (v0_p) -- cycle;
  \draw[facestyle_p] (v0_p) -- (v2_p) -- (v1_p) -- (v0_p) -- cycle;


  \draw[facestyle_p] (v2_p) -- (v3_p) -- (v1_p) -- (v2_p) -- cycle;



  \definecolor{pointcolor_Latticepointsandverticesofp_0}{rgb}{ 0.2745,0.5882,0.2745 }
  \definecolor{pointcolor_Latticepointsandverticesofp_1}{rgb}{ 0.2745,0.5882,0.2745 }
  \definecolor{pointcolor_Latticepointsandverticesofp_2}{rgb}{ 0.2745,0.5882,0.2745 }
  \definecolor{pointcolor_Latticepointsandverticesofp_3}{rgb}{ 0.2745,0.5882,0.2745 }

  \coordinate (v0_Latticepointsandverticesofp) at (0, 0, 0);
  \coordinate (v1_Latticepointsandverticesofp) at (0, 0, 1);
  \coordinate (v2_Latticepointsandverticesofp) at (0, 1, 0);
  \coordinate (v3_Latticepointsandverticesofp) at (1, 0, 0);

  \fill[pointcolor_Latticepointsandverticesofp_1] (v1_Latticepointsandverticesofp) circle (1 pt);

  \fill[pointcolor_Latticepointsandverticesofp_0] (v0_Latticepointsandverticesofp) circle (1 pt);

  \fill[pointcolor_Latticepointsandverticesofp_2] (v2_Latticepointsandverticesofp) circle (1 pt);

  \fill[pointcolor_Latticepointsandverticesofp_3] (v3_Latticepointsandverticesofp) circle (1 pt);

\end{tikzpicture}
\end{subfigure}
\begin{subfigure}[b]{0.30\textwidth}

\begin{tikzpicture}[x  = {(0.9cm,-0.076cm)},
                    y  = {(-0.06cm,0.95cm)},
                    z  = {(-0.44cm,-0.29cm)},
                    scale = 1.7,
                    color = {lightgray}]

  \coordinate (v0_pr) at (1, 0, 0);
  \coordinate (v1_pr) at (0, 1, 0);
  \coordinate (v2_pr) at (0, 0, 1);
  \coordinate (v3_pr) at (1, 1, 0);
  \coordinate (v4_pr) at (1, 0, 1);
  \coordinate (v5_pr) at (0, 1, 1);

  \definecolor{edgecolor_pr}{rgb}{ 0,0,0 }
  \tikzstyle{facestyle_pr} = [fill=none, fill opacity=0.85, preaction={draw=white, line cap=round, line width=1.5 pt}, draw=edgecolor_pr, line width=1 pt, line cap=round, line join=round]

  \draw[facestyle_pr] (v2_pr) -- (v0_pr) -- (v4_pr) -- (v2_pr) -- cycle;
  \draw[facestyle_pr] (v2_pr) -- (v5_pr) -- (v1_pr) -- (v2_pr) -- cycle;
  \draw[facestyle_pr] (v3_pr) -- (v0_pr) -- (v1_pr) -- (v3_pr) -- cycle;
  \draw[facestyle_pr] (v1_pr) -- (v0_pr) -- (v2_pr) -- (v1_pr) -- cycle;
  \draw[facestyle_pr] (v3_pr) -- (v1_pr) -- (v5_pr) -- (v3_pr) -- cycle;


  \draw[facestyle_pr] (v4_pr) -- (v0_pr) -- (v3_pr) -- (v4_pr) -- cycle;


  \draw[facestyle_pr] (v2_pr) -- (v4_pr) -- (v5_pr) -- (v2_pr) -- cycle;


  \draw[facestyle_pr] (v4_pr) -- (v3_pr) -- (v5_pr) -- (v4_pr) -- cycle;



  \definecolor{pointcolor_Latticepointsandverticesofpr_0}{rgb}{ 0.2745,0.5882,0.2745 }
  \definecolor{pointcolor_Latticepointsandverticesofpr_1}{rgb}{ 0.2745,0.5882,0.2745 }
  \definecolor{pointcolor_Latticepointsandverticesofpr_2}{rgb}{ 0.2745,0.5882,0.2745 }
  \definecolor{pointcolor_Latticepointsandverticesofpr_3}{rgb}{ 0.2745,0.5882,0.2745 }
  \definecolor{pointcolor_Latticepointsandverticesofpr_4}{rgb}{ 0.2745,0.5882,0.2745 }
  \definecolor{pointcolor_Latticepointsandverticesofpr_5}{rgb}{ 0.2745,0.5882,0.2745 }

  \coordinate (v0_Latticepointsandverticesofpr) at (0, 0, 1);
  \coordinate (v1_Latticepointsandverticesofpr) at (0, 1, 0);
  \coordinate (v2_Latticepointsandverticesofpr) at (0, 1, 1);
  \coordinate (v3_Latticepointsandverticesofpr) at (1, 0, 0);
  \coordinate (v4_Latticepointsandverticesofpr) at (1, 0, 1);
  \coordinate (v5_Latticepointsandverticesofpr) at (1, 1, 0);

  \fill[pointcolor_Latticepointsandverticesofpr_0] (v0_Latticepointsandverticesofpr) circle (1 pt);

  \fill[pointcolor_Latticepointsandverticesofpr_2] (v2_Latticepointsandverticesofpr) circle (1 pt);

  \fill[pointcolor_Latticepointsandverticesofpr_4] (v4_Latticepointsandverticesofpr) circle (1 pt);

  \fill[pointcolor_Latticepointsandverticesofpr_1] (v1_Latticepointsandverticesofpr) circle (1 pt);

  \fill[pointcolor_Latticepointsandverticesofpr_3] (v3_Latticepointsandverticesofpr) circle (1 pt);

  \fill[pointcolor_Latticepointsandverticesofpr_5] (v5_Latticepointsandverticesofpr) circle (1 pt);

\end{tikzpicture}
\end{subfigure}
\begin{subfigure}[b]{0.30\textwidth}

\begin{tikzpicture}[x  = {(0.9cm,-0.076cm)},
                    y  = {(-0.06cm,0.95cm)},
                    z  = {(-0.44cm,-0.29cm)},
                    scale = 1,
                    color = {lightgray}]

  \coordinate (v0_p) at (1, 0, 0);
  \coordinate (v1_p) at (0, 1, 0);
  \coordinate (v2_p) at (0, 0, 1);
  \coordinate (v3_p) at (2, 0, 0);
  \coordinate (v4_p) at (2, 1, 0);
  \coordinate (v5_p) at (2, 0, 1);
  \coordinate (v6_p) at (0, 2, 0);
  \coordinate (v7_p) at (1, 2, 0);
  \coordinate (v8_p) at (0, 2, 1);
  \coordinate (v9_p) at (0, 0, 2);
  \coordinate (v10_p) at (1, 0, 2);
  \coordinate (v11_p) at (0, 1, 2);

  \definecolor{edgecolor_p}{rgb}{ 0,0,0 }
  \tikzstyle{facestyle_p} = [fill=none, fill opacity=0.85, preaction={draw=white, line cap=round, line width=1.5 pt}, draw=edgecolor_p, line width=1 pt, line cap=round, line join=round]

  \draw[facestyle_p] (v5_p) -- (v10_p) -- (v9_p) -- (v2_p) -- (v0_p) -- (v3_p) -- (v5_p) -- cycle;
  \draw[facestyle_p] (v11_p) -- (v8_p) -- (v6_p) -- (v1_p) -- (v2_p) -- (v9_p) -- (v11_p) -- cycle;
  \draw[facestyle_p] (v6_p) -- (v7_p) -- (v4_p) -- (v3_p) -- (v0_p) -- (v1_p) -- (v6_p) -- cycle;
  \draw[facestyle_p] (v2_p) -- (v1_p) -- (v0_p) -- (v2_p) -- cycle;


  \draw[facestyle_p] (v8_p) -- (v7_p) -- (v6_p) -- (v8_p) -- cycle;


  \draw[facestyle_p] (v4_p) -- (v5_p) -- (v3_p) -- (v4_p) -- cycle;


  \draw[facestyle_p] (v10_p) -- (v11_p) -- (v9_p) -- (v10_p) -- cycle;


  \draw[facestyle_p] (v4_p) -- (v7_p) -- (v8_p) -- (v11_p) -- (v10_p) -- (v5_p) -- (v4_p) -- cycle;



  \definecolor{pointcolor_Latticepointsandverticesofp_0}{rgb}{ 0.2745,0.5882,0.2745 }
  \definecolor{pointcolor_Latticepointsandverticesofp_1}{rgb}{ 0.2745,0.5882,0.2745 }
  \definecolor{pointcolor_Latticepointsandverticesofp_2}{rgb}{ 0.2745,0.5882,0.2745 }
  \definecolor{pointcolor_Latticepointsandverticesofp_3}{rgb}{ 0.2745,0.5882,0.2745 }
  \definecolor{pointcolor_Latticepointsandverticesofp_4}{rgb}{ 0.2745,0.5882,0.2745 }
  \definecolor{pointcolor_Latticepointsandverticesofp_5}{rgb}{ 0.2745,0.5882,0.2745 }
  \definecolor{pointcolor_Latticepointsandverticesofp_6}{rgb}{ 0.2745,0.5882,0.2745 }
  \definecolor{pointcolor_Latticepointsandverticesofp_7}{rgb}{ 0.2745,0.5882,0.2745 }
  \definecolor{pointcolor_Latticepointsandverticesofp_8}{rgb}{ 0.2745,0.5882,0.2745 }
  \definecolor{pointcolor_Latticepointsandverticesofp_9}{rgb}{ 0.2745,0.5882,0.2745 }
  \definecolor{pointcolor_Latticepointsandverticesofp_10}{rgb}{ 0.2745,0.5882,0.2745 }
  \definecolor{pointcolor_Latticepointsandverticesofp_11}{rgb}{ 0.2745,0.5882,0.2745 }
  \definecolor{pointcolor_Latticepointsandverticesofp_12}{rgb}{ 0.2745,0.5882,0.2745 }
  \definecolor{pointcolor_Latticepointsandverticesofp_13}{rgb}{ 0.2745,0.5882,0.2745 }
  \definecolor{pointcolor_Latticepointsandverticesofp_14}{rgb}{ 0.2745,0.5882,0.2745 }
  \definecolor{pointcolor_Latticepointsandverticesofp_15}{rgb}{ 0.2745,0.5882,0.2745 }

  \coordinate (v0_Latticepointsandverticesofp) at (0, 0, 1);
  \coordinate (v1_Latticepointsandverticesofp) at (0, 0, 2);
  \coordinate (v2_Latticepointsandverticesofp) at (0, 1, 0);
  \coordinate (v3_Latticepointsandverticesofp) at (0, 1, 1);
  \coordinate (v4_Latticepointsandverticesofp) at (0, 1, 2);
  \coordinate (v5_Latticepointsandverticesofp) at (0, 2, 0);
  \coordinate (v6_Latticepointsandverticesofp) at (0, 2, 1);
  \coordinate (v7_Latticepointsandverticesofp) at (1, 0, 0);
  \coordinate (v8_Latticepointsandverticesofp) at (1, 0, 1);
  \coordinate (v9_Latticepointsandverticesofp) at (1, 0, 2);
  \coordinate (v10_Latticepointsandverticesofp) at (1, 1, 0);
  \coordinate (v11_Latticepointsandverticesofp) at (1, 1, 1);
  \coordinate (v12_Latticepointsandverticesofp) at (1, 2, 0);
  \coordinate (v13_Latticepointsandverticesofp) at (2, 0, 0);
  \coordinate (v14_Latticepointsandverticesofp) at (2, 0, 1);
  \coordinate (v15_Latticepointsandverticesofp) at (2, 1, 0);

  \fill[pointcolor_Latticepointsandverticesofp_1] (v1_Latticepointsandverticesofp) circle (1 pt);

  \fill[pointcolor_Latticepointsandverticesofp_4] (v4_Latticepointsandverticesofp) circle (1 pt);

  \fill[pointcolor_Latticepointsandverticesofp_9] (v9_Latticepointsandverticesofp) circle (1 pt);

  \fill[pointcolor_Latticepointsandverticesofp_0] (v0_Latticepointsandverticesofp) circle (1 pt);

  \fill[pointcolor_Latticepointsandverticesofp_3] (v3_Latticepointsandverticesofp) circle (1 pt);

  \fill[pointcolor_Latticepointsandverticesofp_6] (v6_Latticepointsandverticesofp) circle (1 pt);

  \fill[pointcolor_Latticepointsandverticesofp_8] (v8_Latticepointsandverticesofp) circle (1 pt);

  \fill[pointcolor_Latticepointsandverticesofp_11] (v11_Latticepointsandverticesofp) circle (1 pt);

  \fill[pointcolor_Latticepointsandverticesofp_14] (v14_Latticepointsandverticesofp) circle (1 pt);

  \fill[pointcolor_Latticepointsandverticesofp_2] (v2_Latticepointsandverticesofp) circle (1 pt);

  \fill[pointcolor_Latticepointsandverticesofp_5] (v5_Latticepointsandverticesofp) circle (1 pt);

  \fill[pointcolor_Latticepointsandverticesofp_7] (v7_Latticepointsandverticesofp) circle (1 pt);

  \fill[pointcolor_Latticepointsandverticesofp_10] (v10_Latticepointsandverticesofp) circle (1 pt);

  \fill[pointcolor_Latticepointsandverticesofp_12] (v12_Latticepointsandverticesofp) circle (1 pt);

  \fill[pointcolor_Latticepointsandverticesofp_13] (v13_Latticepointsandverticesofp) circle (1 pt);

  \fill[pointcolor_Latticepointsandverticesofp_15] (v15_Latticepointsandverticesofp) circle (1 pt);

\end{tikzpicture}
\end{subfigure}
\caption{Lattice polytopes from Example \ref{ex:matroidpolytope} (left to right): $B(r_0)$, $B(r_1)$ and $B(\overline{r})$. The figures were created using {\tt polymake}~\cite{polymake}.}
\end{figure}
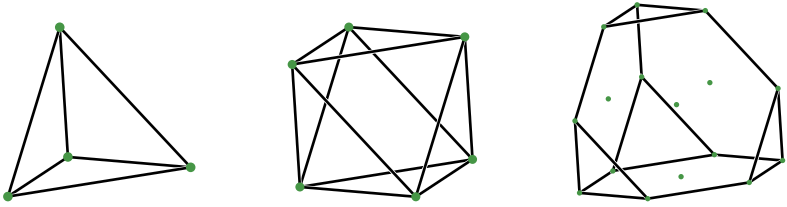

\section{Polynomials with $M$-convex support}
Let $h\in\R[x_1,\ldots,x_n]$ be a homogeneous polynomial of degree $d$ and assume that its support $\supp(h)\subset\Z^n$ is $M$-convex (see Definition \ref{def:mconv}). Recall that the \emph{Newton polytope} $\newt(h)$ of $h$ is defined as the convex hull of $\supp(h)$ in $\R^n$. The statements in the following theorem are standard in the literature of polymatroids. Proofs can be found for example in \cite[\S4.4]{murota}.

\begin{thm}\label{thm:mconvpoly}
 Consider the function $\rho_h:2^{[n]}\to\Z_{\geq0}$ defined by $$\rho_h(S)=\max\{\sum_{i\in S}\alpha_i:\, \alpha\in\supp(h)\}$$for all $S\subset[n]$. Then $\rho_h$ is a polymatroid of rank $d$, $\newt(h)=B(\rho_h)$ and $\supp(h)=B(\rho_h)\cap\Z^n$.
\end{thm}

\begin{rem}
 If $h$ is stable, then all coefficients of $h$ have the same sign, see e.g. \cite[Lemma~4.3]{Bra11}. This implies that for every $e\in(\R_{>0})^n$ we have that
 $$\rho_h(S)=\rank_{h,e}(\sum_{i\in S}\delta_i)$$as there can be no cancellation of terms.
\end{rem}

An intriguing class of polynomials with $M$-convex support are Lorentzian polynomials.

\begin{Def}
 Let $h\in\R[x_1,\ldots,x_n]$ be a homogeneous polynomial of degree $d$ whose support is $M$-convex and all of whose coefficients are nonnegative. Then $h$ is \emph{Lorentzian} if for every $i_1,\ldots,i_{d-2}\in[n]$ the Hessian of the derivative $$\frac{\partial^{d-2}}{\partial x_{i_1}\cdots\partial x_{i_{d-2}}}h$$ has at most one positive eigenvalue.
\end{Def}

\begin{rem}
 Let us clarify the relations between the different classes of polynomials that appeared so far. Let $h\in\R[x_1,\ldots,x_n]$ be a homogeneous polynomial with nonnegative coefficients. Then we have the following implications:
 \begin{eqnarray*}
    &h\textrm{ is stable} \Leftrightarrow h\textrm{ is hyperbolic w.r.t. every }a\in(\R_{>0})^n\\ \Rightarrow& h\textrm{ is Lorentzian}\\
     \Rightarrow&\supp(h) \textrm{ is }M\textrm{-convex}
 \end{eqnarray*}
 There are Lorentzian polynomials that are not stable \cite[Example~2.3]{lorentzian}. Furthermore, not every homogeneous polynomial with $M$-convex support and nonnegative coefficients is Lorentzian.
\end{rem}

\begin{thm}[Theorem~3.10 in \cite{lorentzian}]\label{thm:lorsup}
 A subset $B\subset(\Z_{\geq0})^n$ is $M$-convex if and only if there is a Lorentzian polynomial $h\in\R[x_1,\ldots,x_n]$ with $B=\supp(h)$.
\end{thm}

\begin{lemma}[Corollary~2.11 in \cite{lorentzian}]\label{lem:deri}
 Let $h\in\R[x_1,\ldots,x_n]$ be a Lorentzian polynomial and $e\in(\R_{\geq0})^n$. The derivative $$\textrm{D}_eh=\sum_{i=1}^n e_i \frac{\partial h}{\partial x_i}$$is Lorentzian as well. In particular, the support of $\textrm{D}_eh$ is $M$-convex.
\end{lemma}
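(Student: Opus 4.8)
The plan is to verify that $\textrm{D}_eh$ meets the three requirements in the definition of a Lorentzian polynomial of degree $d-1$, where $d=\deg h$: nonnegative coefficients; $M$-convexity of $\supp(\textrm{D}_eh)$; and, for every multi-index $\beta$ with $|\beta|=(d-1)-2=d-3$, that the Hessian of $\partial^{\beta}(\textrm{D}_eh)$ has at most one positive eigenvalue. If $d\le 2$ there is nothing to do past the first point: $\textrm{D}_eh$ then has degree $\le 1$, and a homogeneous polynomial of degree $\le 1$ with nonnegative coefficients is Lorentzian because its support — empty, a point, or a set of unit vectors — is always $M$-convex. So assume $d\ge 3$.

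The first condition is immediate: $\textrm{D}_eh=\sum_i e_i\,\partial h/\partial x_i$, differentiation sends a monomial with nonnegative coefficient to a nonnegative multiple of a monomial, and $e\in(\R_{\geq0})^n$, so nothing cancels. For the same reason $\supp(\textrm{D}_eh)=\bigcup_{i\,:\,e_i\neq0}(\supp(h)-\delta_i)^{+}$, where $(B-\delta_i)^{+}:=\{\alpha-\delta_i:\alpha\in B,\ \alpha_i\ge1\}$. For the second condition I would verify the exchange axiom of Definition~\ref{def:mconv} directly: given elements $\alpha-\delta_i$ and $\alpha'-\delta_j$ of this set and an index $a$ at which the first strictly exceeds the second, one checks that $\alpha_a\ge\alpha'_a$, applies $M$-convexity of $\supp(h)$ to $\alpha,\alpha'$ (at $a$, after at most one preliminary exchange in the case $a=j\neq i$), and pulls the exchange partner back, adjusting which unit vector is subtracted in the finitely many boundary cases. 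This is routine; it is also a special case of the stability of $M$-convex sets under such operations, standard in the polymatroid literature \cite{murota}.

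The third condition is the crux. Fix $\beta$ with $|\beta|=d-3$ and set $g:=\partial^{\beta}h$, a homogeneous cubic, so that $\partial^{\beta}(\textrm{D}_eh)=\textrm{D}_eg$. Since $g$ is a $(d-3)$-fold derivative of the Lorentzian polynomial $h$, iterating the argument of the previous paragraph makes $\supp(g)$ $M$-convex, and for every index $m$ the Hessian $M_m$ of $\partial g/\partial x_m=\partial^{\beta+\delta_m}h$ has at most one positive eigenvalue — this is exactly the defining inequality for $h$ at the weight-$(d-2)$ multi-index $\beta+\delta_m$. Because $g$ is a cubic, its Hessian is linear in the variables, $H(g)(x)=\sum_m x_mM_m$, and $\textrm{D}_eg$ is a quadratic form whose constant Hessian equals $H(g)(e)=\sum_m e_mM_m$. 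So the third condition asserts precisely that the Hessian of a Lorentzian cubic, evaluated at any point of $(\R_{\geq0})^n$, has at most one positive eigenvalue — a fact we already know at the extreme rays $e=\delta_m$ of the orthant. This genuinely uses $M$-convexity: dropping it, the statement fails, e.g.\ for $g=x^3+y^3$ each $M_m$ has one positive eigenvalue while the Hessian at $(1,1)$ has two, the obstruction being that $\{3\delta_1,3\delta_2\}$ is not $M$-convex.

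To prove this cubic statement I would use the classical fact that a symmetric matrix $N$ has at most one positive eigenvalue if and only if $(u^{\top}Nv)^2\ge(u^{\top}Nu)(v^{\top}Nv)$ whenever $u^{\top}Nu>0$; the goal is then to deduce the required inequality for $N=\sum_m e_mM_m$ by playing the analogous inequalities known for each $M_m$ against the structure of $\supp(g)$. Already the case $n=2$ shows the mechanism: writing $g=ax^3+bx^2y+cxy^2+dy^3$, the two eigenvalue conditions read $3ac\le b^2$ and $3bd\le c^2$, and together with $M$-convexity of $\supp(g)$ they force $9ad\le bc$ (multiply the two inequalities when $bc>0$; when $bc=0$, $M$-convexity forces $ad=0$), which is exactly what makes the Hessian discriminant of the binary quadratic $\textrm{D}_eg$ nonpositive throughout $(\R_{\geq0})^2$. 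The general case follows the same pattern, weighing the weight-$(d-2)$ determinantal inequalities on the coefficients of $h$ against the $M$-convexity of $\supp(h)$, which excludes precisely the ``gappy'' coefficient patterns that would otherwise violate the conclusion; this interplay is the main obstacle, and it is the content of Corollary~2.11 of \cite{lorentzian}. A shorter, less self-contained route is to invoke the characterization of Lorentzian polynomials via complete log-concavity from \cite{lorentzian} and observe that nonnegative directional differentiation trivially preserves the latter, since every iterated nonnegative directional derivative of $\textrm{D}_eh$ is one of $h$.
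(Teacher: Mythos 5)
The paper does not prove this lemma at all: it is imported verbatim as Corollary~2.11 of \cite{lorentzian}, and no argument is given. Your attempt is therefore a bonus, but as a proof it does not close. You reduce correctly to the cubic case, and the binary ($n=2$) computation is clean, but you then explicitly concede the core step: ``the general case follows the same pattern \dots this interplay is the main obstacle, and it is the content of Corollary 2.11 of \cite{lorentzian}.'' That is a gap, not a proof, and it sits exactly where the theorem has content. For $n\ge 3$ the desired inequality for $\sum_m e_m M_m$ is not obtained by formally multiplying pairwise determinantal inequalities; the Br\"and\'en--Huh argument goes through a connectivity analysis of the $M$-convex support and a perturbation to the strictly Lorentzian (full-support) case, neither of which your sketch engages with. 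The $M$-convexity step is also glossed: when you pull the exchange partner $\alpha-\delta_a+\delta_b$ back to $\supp(\mathrm D_e h)$ you must subtract the \emph{same} $\delta_i$ to recover $u-\delta_a+\delta_b$ (there is no freedom to ``adjust which unit vector is subtracted''), and in the genuine boundary case $a=i$, $\alpha_i=1$ the naive candidate fails to lie in $\supp(\mathrm D_e h)$; moreover when $e$ has zero coordinates, $\supp(\mathrm D_e h)$ is a union of single-coordinate contractions rather than the full shadow, and unions of $M$-convex sets are not $M$-convex in general, so a real argument is needed here.

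Your fallback route --- cite the characterization of Lorentzian polynomials as the completely (or strongly) log-concave ones from \cite{lorentzian} and note that nonnegative directional differentiation manifestly preserves that class --- is correct and is essentially how the statement is established there. But this is a citation of a deep theorem of the same source, not an independent proof. Since the paper under review itself only cites \cite{lorentzian}, that is an acceptable way to dispose of the lemma; present it as the proof rather than as a footnote appended to a sketch that stalls at the crux.
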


\begin{lemma}
 If $h\in\R[x_1,\ldots,x_n]$ is Lorentzian of degree $d$ and $e\in(\R_{>0})^n$, then we have for all $0\leq k\leq d$ that $(\rho_h)_k=\rho_{\textrm{D}^k_eh}.$
\end{lemma}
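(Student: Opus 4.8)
The statement to prove is: if $h$ is Lorentzian of degree $d$ and $e\in(\R_{>0})^n$, then $(\rho_h)_k = \rho_{\textrm{D}^k_e h}$ for all $0\le k\le d$, where $\rho_g(S) = \max\{\sum_{i\in S}\alpha_i : \alpha\in\supp(g)\}$ and $(\rho_h)_k(S) = \min(d-k, \rho_h(S))$.

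The plan is to induct on $k$, the base case $k=0$ being the trivial identity $\rho_h = \rho_h$ together with $(\rho_h)_0 = \rho_h$ since $\rho_h\le d$. For the inductive step it suffices, by the definition of truncation, to treat $k=1$ in the following sense: prove that $(\rho_h)_1 = \rho_{\textrm{D}_e h}$ whenever $h$ is Lorentzian and $e\in(\R_{>0})^n$, and then apply this repeatedly. Indeed, $\textrm{D}^{k}_e h = \textrm{D}_e(\textrm{D}^{k-1}_e h)$, and by Lemma~\ref{lem:deri} the polynomial $\textrm{D}^{k-1}_e h$ is again Lorentzian of degree $d-k+1$; so assuming the $k=1$ case we get $\rho_{\textrm{D}^k_e h} = (\rho_{\textrm{D}^{k-1}_e h})_1$, and by the induction hypothesis $\rho_{\textrm{D}^{k-1}_e h} = (\rho_h)_{k-1}$, whence $\rho_{\textrm{D}^k_e h} = ((\rho_h)_{k-1})_1$. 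Finally one checks the purely combinatorial identity $((\rho_h)_{k-1})_1 = (\rho_h)_k$, which is immediate from $\min(\min(d-(k-1),t)-\text{(shift by one)}\,,\dots)$ — more precisely, truncating $\rho_h$ to cap $d-k+1$ and then the resulting rank-$(d-k+1)$ polymatroid to cap $(d-k+1)-1 = d-k$ gives the same as capping $\rho_h$ directly at $d-k$.

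So the heart of the matter is the $k=1$ identity $\rho_{\textrm{D}_e h}(S) = \min(d-1,\rho_h(S))$ for all $S\subset[n]$. I would argue as follows. By Theorem~\ref{thm:mconvpoly}, $\supp(h) = \newt(h)\cap\Z^n$ and $\rho_h(S) = \max\{\langle \mathbf 1_S,\alpha\rangle : \alpha\in\supp(h)\}$, and similarly $\rho_{\textrm{D}_e h}(S) = \max\{\langle\mathbf 1_S,\beta\rangle : \beta\in\supp(\textrm{D}_e h)\}$; by Lemma~\ref{lem:deri} this support is also $M$-convex, so these are genuine polymatroid rank functions. Now $\textrm{D}_e h = \sum_i e_i\,\partial h/\partial x_i$ with all $e_i>0$. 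Since $h$ is Lorentzian it has nonnegative coefficients, so there is no cancellation: $\supp(\textrm{D}_e h) = \bigcup_i \{\alpha - \delta_i : \alpha\in\supp(h),\ \alpha_i\ge 1\} = \{\alpha - \delta_i : \alpha\in\supp(h),\ i\in\supp(\alpha)\}$. Fix $S\subset[n]$. For ``$\le$'': any $\beta = \alpha-\delta_i\in\supp(\textrm{D}_e h)$ has $\langle\mathbf 1_S,\beta\rangle \le \langle\mathbf 1_S,\alpha\rangle \le \rho_h(S)$, and also $\langle\mathbf 1_S,\beta\rangle \le |\beta|_1 = d-1$; hence $\rho_{\textrm{D}_e h}(S) \le \min(d-1,\rho_h(S))$. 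For ``$\ge$'': pick $\alpha\in\supp(h)$ with $\langle\mathbf 1_S,\alpha\rangle = \rho_h(S)$. If $\rho_h(S) \le d-1$, I want to find $i\in\supp(\alpha)$ with $i\notin S$ (so that $\beta = \alpha - \delta_i$ still has $\langle\mathbf 1_S,\beta\rangle = \rho_h(S)$); such $i$ exists precisely because $\sum_{i\notin S}\alpha_i = d - \rho_h(S) \ge 1$. If instead $\rho_h(S) = d$, i.e.\ $\supp(\alpha)\subset S$, then $|\alpha|_1 = d$ forces, after removing any one $i\in\supp(\alpha)$, that $\langle\mathbf 1_S,\alpha-\delta_i\rangle = d-1$; so $\rho_{\textrm{D}_e h}(S)\ge d-1 = \min(d-1,\rho_h(S))$. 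Combining the two inequalities gives the claim.

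The main obstacle — and the only place Lorentzianity (as opposed to just $M$-convexity of the support, or nonnegativity) is really needed — is ensuring that passing to the derivative $\textrm{D}_e h$ keeps us in the world where $\rho$ is a well-behaved polymatroid rank function, i.e.\ that $\supp(\textrm{D}_e h)$ is $M$-convex; this is exactly Lemma~\ref{lem:deri}, so it is already available. The nonnegativity of the coefficients (which follows from the Lorentzian hypothesis) is what rules out cancellation and pins down $\supp(\textrm{D}_e h)$ exactly as the shifted union above; without it the ``$\ge$'' direction could fail. Everything else is the elementary ``no-cancellation + Newton-polytope'' bookkeeping sketched above, plus the routine combinatorial identity for iterated truncations needed to run the induction.
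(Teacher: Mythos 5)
Your proof is correct and follows essentially the same route as the paper's: reduce to $k=1$ by iterated differentiation (the paper states this more tersely, while you spell out the truncation identity $((\rho_h)_{k-1})_1=(\rho_h)_k$ and invoke Lemma~\ref{lem:deri} to keep $\textrm{D}^{k-1}_e h$ Lorentzian), and then prove the $k=1$ case by the same case split on $\rho_h(S)=d$ versus $\rho_h(S)<d$, using nonnegativity of coefficients to rule out cancellation in $\supp(\textrm{D}_e h)$.
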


\begin{proof}
 It suffices to prove the claim in the case $k=1$ because the general case follows from an iterative application of this case.
 
 Let $S\subset[n]$. Since $\textrm{D}_eh$ has degree $d-1$, we have $\rho_{\textrm{D}_eh}(S)\leq d-1$. If $\rho_h(S)=d$, then there is an $\alpha\in\supp(h)$ such that $\sum_{i\in S}\alpha_i=d$. For any $j\in[n]$ with $\alpha_j>0$ we have $\alpha'=\alpha-\delta_j\in\supp(\textrm{D}_eh)$ and thus $\rho_{\textrm{D}_eh}(S)\geq d-1$. Now let $\rho_h(S)<d$ and $\alpha\in\supp(h)$ such that $\sum_{i\in S}\alpha_i=\rho_h(S)$. Since the degree of $h$ is $d$, there must be an index $j\in[n]\setminus S$ such that $\alpha_j>0$. We have $\alpha'=\alpha-\delta_j\in\supp(\textrm{D}_eh)$ and thus $\rho_{\textrm{D}_eh}(S)\geq \rho_h(S)$. If $\beta\in\supp(\textrm{D}_eh)$ satisfies $\rho_{\textrm{D}_eh}(S)=\sum_{i\in S}\beta_i$, then there is a $j\in[n]$ such that $\beta+\delta_j\in\supp(h)$ so $\rho_{\textrm{D}_eh}(S)\leq \rho_h(S)$.
\end{proof}

The following lemma connects the polymatroid $\overline{\rho_h}$ with the variety $\Omega_h$.

\begin{prop}\label{prop:monomials}
 Let $h\in\R[x_1,\ldots,x_n]$ be homogeneous of degree $d$ with $\supp(h)$ being $M$-convex.
 Consider the polymatroid $r=\rho_h$. For each $0\leq k\leq d$ the set $B(r_k)\cap\Z^n$ agrees with the set $B_k$ of all $\alpha\in\Z^n$ such that the monomial $\prod_{i=1}^nx_i^{\alpha_i}$ is in the support of a $k$th order partial derivative of $h$.
\end{prop}

\begin{proof}
 Both $r_k$ and $B_k$ only depend on the support of $h$. Thus we can assume without loss of generality that $h$ is Lorentzian by Theorem \ref{thm:lorsup}. Then for any $e\in(\R_{>0})^n$ we have that $B_k$ is the support of $\textrm{D}^k_eh$ because $h$ has nonnegative coefficients. Thus $B_k$ is $M$-convex by Lemma \ref{lem:deri} and the result follows from Theorem \ref{thm:mconvpoly} and the preceding lemma.
\end{proof}

\section{Preliminaries from algebraic and toric geometry}
In this section we revisit some notions and results from algebraic geometry that will be used in the final section. 
Let $A$ be a nonzero $(m+1)\times(n+1)$ matrix. Recall that a rational map $\pi:\pp^n\ratto\pp^m$ of the form $[x]\mapsto[Ax]$ is called a \emph{linear projection} and that the \emph{centre} of $\pi$ is the linear subspace $E$ of all $[x]\in\pp^n$ such that $Ax=0$. Clearly, the rational map $\pi$ is regular on $\pp^n\setminus E$. Thus if $X\subset\pp^n$ is a projective variety with $X\cap E=\emptyset$, the restriction $f=\pi|_X:X\to\pi(X)$ is a morphism. This map $f$ is \emph{finite}, so in particular it has only finite fibers. See \cite[\S I.5.3]{Shafa77} for the definition and proofs. We will use the following standard facts on finite morphisms which follow for example from \cite[Lemma~14.8]{ha95}.

\begin{lemma}\label{lem:finite}
 Let $f_i:X_i\to Y_i$,  $i=1,2$, be finite morphisms of projective varieties. The product map $f_1\times f_2$ is also finite. If $Y_1=X_2$, then $f_2\circ f_1$ is finite. If $Z\subset X_1$ is closed, then $f_1|_Z:Z\to f_1(Z)$ is finite.
\end{lemma}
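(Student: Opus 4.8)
The plan is to deduce all three assertions from a single characterization of finiteness in the projective setting, namely that a morphism between projective varieties is finite if and only if each of its fibers is a finite set; this is part of the standard package of results encapsulated in \cite[Lemma~14.8]{ha95}. Alongside this criterion I would use three elementary stability properties of the class of projective varieties: a product of projective varieties is projective (via the Segre embedding), the image of a projective variety under a morphism is closed and hence again projective, and a closed subvariety of a projective variety is projective. Granting these, each of the three claims reduces to a short computation of fibers.

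First I would treat the product map $f_1\times f_2\colon X_1\times X_2\to Y_1\times Y_2$. Source and target are projective and $f_1\times f_2$ is a morphism, so by the criterion it suffices to observe that the fiber over $(y_1,y_2)$ equals $f_1^{-1}(y_1)\times f_2^{-1}(y_2)$, a product of two finite sets. Next, for the composition in the case $Y_1=X_2$, I would note that the fiber of $f_2\circ f_1$ over a point $z$ is the preimage under $f_1$ of the finite set $f_2^{-1}(z)$, hence a finite union of fibers of $f_1$ and therefore finite; since $X_1$ and $Y_2$ are projective, the criterion applies. Finally, for a closed $Z\subset X_1$, the set $Z$ is projective, the image $f_1(Z)$ is closed in $Y_1$ and hence projective, and the fiber of $f_1|_Z$ over a point $y$ is contained in the fiber $f_1^{-1}(y)$ of $f_1$, so it is finite; once more the criterion yields finiteness.

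I do not anticipate a genuine obstacle: the substantive input is precisely the finite-fibers criterion that is being imported from \cite{ha95}, and the remaining work is bookkeeping with fibers together with the three stability facts above. The only point requiring a modicum of care is checking in each case that the map under consideration is genuinely a morphism of projective varieties, so that the criterion is applicable — and this is exactly what the stability facts are invoked to guarantee.
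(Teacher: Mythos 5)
Your proof is correct and takes essentially the approach the paper implicitly relies on: the paper gives no argument beyond the citation of \cite[Lemma~14.8]{ha95}, and your derivation spells out exactly how the finite-fiber characterization of finiteness for morphisms of projective varieties, together with the closure of images and the stability of projectivity under products and passage to closed subsets, yields all three claims by a direct fiber computation.
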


Given a projective variety $X$, a \emph{normalisation} of $X$ is a normal variety ${X}^\nu$ with a finite birational morphism $\nu:X^\nu\to X$. The normalisation is unique up to isomorphism. In particular, if $Y\to X$ is a finite birational morphism from a smooth variety $Y$, then $Y$ is the normalisation of $X$ because every smooth variety is normal. See \cite[\S II.5]{Shafa77} for definitions and proofs.

We will especially consider \emph{toric varieties}. The book \cite{toric} gives a comprehensive introduction to toric varieties and we will adopt their notation. For example, given a lattice polytope $P\subset\R^n$, we denote by $X_P$ the associated toric variety \cite[\S2.3]{toric}. Similarly, for any finite set $A\subset\Z^n$ of lattice points we denote by $X_A\subset\pp^{|A|-1}$ the image of the monomial map whose exponents are given by the elements of $A$. Note that in general $X_{P\cap\Z^n}$ is not necessarily isomorphic to $X_P$ but when $P$ is a smooth polytope this is the case by \cite[Proposition~2.4.4]{toric}:
A lattice polytope $P\subset\R^n$ is called \emph{smooth} if its associated toric variety $X_P$ is smooth \cite[\S2.4]{toric}. 
We further have:

\begin{cor}\label{cor:smooth}
 Let $r$ be a polymatroid on $[n]$. Then the polytope $B(\overline{r})$ is a smooth lattice polytope.
\end{cor}

\begin{proof}
 By the Corollary to \cite[\S18.4, Theorem 1]{welsh} the independence polytope $P(\overline{r})$ is a lattice polytope. Since $B(\overline{r})$ is a face of $P(\overline{r})$, it is a lattice polytope as well. Corollary \ref{cor:simple} states that $B(\overline{r})$ is simple and the Submodularity Theorem\footnote{The Submodularity Theorem was proved by Edmonds \cite{edmonds} although not stated in the language of generalized permutahedra. An alternative, combinatorial proof is given in \cite[Theorem~1.2]{submod}.} \cite{edmonds} states that $B(\overline{r})$ is a so-called \emph{generalized permutohedron}. Now the claim follows from  \cite[Corollary~3.10]{post} which says that a simple lattice polytope, which is a generalized permutohedron, is smooth.
\end{proof}

\begin{rem}\label{rem:nonsmooth}
 Let $h\in\R[x_1,\ldots,x_n]$ be homogeneous of degree $d$ with $\supp(h)$ being $M$-convex and let $B_k$ the set of all $\alpha\in\Z^n$ such that the monomial $\prod_{i=1}^nx_i^{\alpha_i}$ is in the support of a $k$th order partial derivative of $h$. Then it follows from Proposition \ref{prop:monomials} and Corollary \ref{cor:smooth} that the Minkowski sum $B_1+\ldots+B_{d-1}$ is the set of lattice points in a smooth polytope.
 In general, if we drop the assumption of $M$-convexity, this is no longer true. Consider for example $h=a\cdot x_1x_2^2+b\cdot x_3^3$ with nonzero $a,b$. Then $B_1+B_2$ is the set of lattice points in a simple polytope that is not smooth. 
\end{rem}

\section{A sufficient criterion for smoothness}\label{sec:results}

Let $h\in\R[x_1,\ldots,x_n]$ be a homogeneous polynomial of degree $d$ whose support is $M$-convex with nonnegative coefficients and $r=\rho_h$. Recall that we denote by $D^k_{1},\ldots, D^k_{m_k}$ a basis of the span of all $k$th order partial derivatives of $h$. For all $1\leq k<d$ consider the rational map \[{\Delta^k}h:\pp^{n-1}\ratto\pp^{m_k-1},\,[x]\mapsto[D^k_1(x):\cdots:D^k_{m_k}(x)].\]

By Proposition \ref{prop:monomials} we can decompose the map $\Delta^k h$ as $\pi_k\circ f_k$ where $f_k$ is the monomial map associated to the polytope $B(r_k)$ (whose image is $X_{B(r_k)\cap\Z^n}$) and $\pi_k$ the linear projection given by summing the monomials in each $D^k_i$.

\begin{ex}
 Let $h=x_1^2x_2+x_1x_2^2+x_1^2x_3+x_1x_2x_3+x_2^2x_3$. Then $\Delta^1h(x)$ equals:
 $$[2x_1x_2+x_2^2+2x_1x_3+x_2x_3:x_1^2+2x_1x_2+x_1x_3+2x_2x_3:x_1^2+x_1x_2+x_2^2].$$ We further have $f_1:\pp^2\ratto\pp^4,[x_1:x_2:x_3]\mapsto[x_1^2:x_1x_2:x_1x_3:x_2^2:x_2x_3]$. We label the coordinates on $\pp^4$ by $z_{ij}$ where $i$ and $j$ keep track of the exponent of $x_1$ and $x_2$ respectively. The image $X$ of $f_1$ is cut out by $z_{10}z_{02}-z_{11}z_{01},z_{11}z_{10}-z_{20}z_{01}$ and $z_{11}^2-z_{20}z_{02}$. Furthermore, the projection $\pi_1$ sends $[z_{20}:z_{11}:z_{10}:z_{02}:z_{01}]$ to $$[2z_{11}+z_{02}+2z_{10}+z_{01}:z_{20}+2z_{11}+z_{10}+2z_{01}:z_{20}+z_{11}+z_{02}].$$The centre of $\pi_1$ is spanned by $[0:-1:0:1:1]$ and $[1:-1:1:0:0]$ and is disjoint from $X$. Thus $\pi_1$ restricts to a finite morphism $X\to\pp^2$.
\end{ex}

\begin{prop}\label{prop:matroid}
 If the centre of the linear projection $\pi_k$ is disjoint from $X_{B(r_k)\cap\Z^n}$ for each $1\leq k<d$, then $\Omega_h$ is smooth. More precisely, it is isomorphic to the smooth toric variety $X_{B(\overline{r_1})}$.
\end{prop}

\begin{proof}
 Let $P=B(\overline{r_1})$. By definition $\Omega_h$ is the normalisation of the image $Y\subset\prod_{i=1}^{d-1}\pp^{m_i-1}$ of the birational map $\Delta h(x)=(\Delta^1h(x),\ldots,\Delta^{d-1}h(x))$. Consider the rational map $f:\pp^{n-1}\ratto\prod_{i=1}^{d-1}\pp^{|B(r_i)\cap\Z^n|-1}$ given by $f(x)=(f_1(x),\ldots,f_{d-1}(x))$ and let $\pi=\prod_{i=1}^{d-1}\pi_i$. By construction $\Delta h$ factors as $\pi\circ f$. If we compose $f$ with the Segre embedding of $\prod_{i=1}^{d-1}\pp^{|B(r_i)\cap\Z^n|-1}$, we obtain the monomial map associated to the Minkowski sum $\sum_{i=1}^{d-1}B(r_i)$ which is $P$ by Remark \ref{cor:minkowski}. Thus the image of $f$ can be identified with $X_{P\cap\Z^n}$ and $Y$ is the image of $X_{P\cap\Z^n}$ under the rational map $\pi:\prod_{i=1}^{d-1}\pp^{|B(r_i)\cap\Z^n|-1}\ratto\prod_{i=1}^{d-1}\pp^{m_i-1}$. Because $P$ is smooth by Corollary \ref{cor:smooth} the variety $X_{P\cap\Z^n}$ is the smooth toric variety $X_P$. Letting $E_i$ be the centre of $\pi_i$, this rational map $\pi$ is regular on $U=\prod_{i=1}^{d-1}(\pp^{|B(r_i)\cap\Z^n|-1}\setminus E_i)$. Since the projection of $X_{P}$ on the $i$th factor $\pp^{|B(r_i)\cap\Z^n|-1}$ is $X_{B(r_i)\cap\Z^n}$, which is disjoint from $E_i$ by assumption, it follows that $X_{P}\subset U$. Thus restricting $\pi$ gives a surjective morphism $p=\pi|_{X_P}:X_{P}\to Y$ which is finite since each $\pi_i|_{X_{B(r_i)\cap\Z^n}}$ is finite and by Lemma \ref{lem:finite}. Since $\Delta h=p\circ f$ is birational and $f$ is birational, it follows that $p$ is also birational. Thus $X_P$ is the normalisation of $Y$.
\end{proof}

\begin{ex}
 Consider the polynomial $$h=x^3+11 x^2 y+11 x y^2+y^3+15 x^2 z+46 x y z+15 y^2 z+37 x z^2+37 y z^2+21 z^3$$ $$+w(29x^2+90xy+29y^2+150xz+150yz+137z^2).$$ One can check that $h$ is stable. Note that $h$ has the same support as the polynomial in Example \ref{ex:omegasing} but different coefficients. Using {\tt Macaulay2}~\cite{M2}, one checks that the conditions of Proposition \ref{prop:matroid} are fulfilled and thus $\Omega_h$ is smooth. It is the toric variety associated to the triangular frustum whose vertices are obtained by permuting the last three entries of $(0,3,0,0)$ and $(2,1,0,0)$. Here the coordinates correspond to the variables in alphabetic order.
\end{ex}

Now let $h=\sigma_{d,n}$ be the elementary symmetric polynomial of degree $d$. 

\begin{lemma}\label{lem:elem}
 The centre of the linear projection $\pi_k$ is disjoint from $X_{B(r_k)\cap\Z^n}$ for each $1\leq k<d$.
\end{lemma}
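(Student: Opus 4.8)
The statement to prove is that for $h=\sigma_{d,n}$, the centre $E_k$ of the linear projection $\pi_k$ is disjoint from $X_{B(r_k)\cap\Z^n}$ for each $1\le k<d$. The first observation is that for the elementary symmetric polynomial, the $k$-th order partial derivatives are very explicit: $\frac{\partial^k}{\partial x_{i_1}\cdots\partial x_{i_k}}\sigma_{d,n}=\sigma_{d-k,n-k}(x_j : j\notin\{i_1,\dots,i_k\})$ when $i_1,\dots,i_k$ are distinct, and $0$ otherwise. So a basis $D^k_1,\dots,D^k_{m_k}$ of the span of the $k$-th order partials is given (up to choosing a basis) by the polynomials $\sigma_{d-k}(x_S)$ where $S$ ranges over the $(n-k)$-subsets of $[n]$ and $x_S$ denotes the variables indexed by $S$; here $m_k=\binom{n}{n-k}=\binom{n}{k}$. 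One should first check these are linearly independent (they are: pair against the monomial $\prod_{j\in T}x_j$ for a $(d-k)$-subset $T$ to detect which $S\supset T$ occur), so this genuinely is a basis and $\pi_k$ is the linear map sending a point of $\pp^{m_k-1}$, whose coordinates $z_\alpha$ are indexed by $\alpha\in B(r_k)\cap\Z^n$, to the tuple $\big(\sum_{\alpha}c_{S,\alpha}z_\alpha\big)_S$ where $c_{S,\alpha}$ is the coefficient of $x^\alpha$ in $\sigma_{d-k}(x_S)$, i.e. $c_{S,\alpha}=1$ if $\alpha$ is $0/1$-valued with support a $(d-k)$-subset of $S$, and $0$ otherwise.

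Next I would identify $B(r_k)\cap\Z^n$ concretely. For $h=\sigma_{d,n}$ the matroid is the uniform matroid $U(d,n)$, so $r=\rho_h$ has $r(S)=\min(|S|,d)$ and the $k$-th truncation is $r_k(S)=\min(|S|,d-k)$, the rank function of $U(d-k,n)$. Hence $B(r_k)\cap\Z^n=\supp(\sigma_{d-k,n})$, the set of $0/1$ vectors in $\Z^n$ of weight exactly $d-k$. A point of $X_{B(r_k)\cap\Z^n}\subset\pp^{m_k-1}$ (with $m_k$ here equal to $\binom{n}{d-k}$ if one uses the monomial-map description — one must be slightly careful that the $\pp^{m_k-1}$ appearing as target of $f_k$ has coordinates indexed by $B(r_k)\cap\Z^n$, while the $\pp^{m_k-1}$ as target of $\pi_k$ has coordinates indexed by the $(n-k)$-subsets) is the closure of the image of $(x_1:\cdots:x_n)\mapsto (x^\alpha)_{|\alpha|=d-k,\ \alpha\in\{0,1\}^n}$. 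Being disjoint from $E_k=\ker\pi_k$ means: for every point $z=(z_\alpha)\in X_{B(r_k)\cap\Z^n}$, there is some $(n-k)$-subset $S$ with $\sum_{T\subset S,\,|T|=d-k}z_{\chi_T}\ne0$, where $\chi_T$ is the indicator vector. Since $X_{B(r_k)\cap\Z^n}$ is the toric variety of the hypersimplex $\Delta(d-k,n)$, every point of it is a coordinatewise limit of points $(\prod_{j\in T}x_j)_T$ with all $x_j\ne0$; and the key claim is that for such a point, and in fact for any point of the toric variety, not all the partial sums $\sum_{T\subset S}\prod_{j\in T}x_j=\sigma_{d-k}(x_S)$ can vanish simultaneously.

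The crux is therefore the following: there is no nonzero point $z$ of the hypersimplex toric variety $X_{\Delta(d-k,n)}$ annihilated by all the linear forms $\ell_S=\sum_{T\subset S,\,|T|=d-k}z_{\chi_T}$, $S$ an $(n-k)$-subset of $[n]$. One way to prove this: restrict attention to the torus orbit containing $z$, which corresponds to a face $F$ of the hypersimplex $\Delta(d-k,n)$; the coordinates $z_{\chi_T}$ that are nonzero are exactly those $T$ with $\chi_T$ a vertex of $F$, and these nonzero coordinates take the form $z_{\chi_T}=t^{\chi_T}$ for some $t\in(\C^*)^n$ (after torus translation, the values on a single face are monomials in a torus parameter). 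So I need: for every nonempty face $F$ of $\Delta(d-k,n)$ and every $t\in(\C^*)^n$, some $(n-k)$-subset $S$ has $\sum_{\chi_T\in F}\ t^{\chi_T}\ne0$ where the sum is over vertices $T\subset S$ of $F$. Faces of the hypersimplex are themselves products of hypersimplices indexed by an ordered set partition of $[n]$ (this is classical), which should make it possible to choose $S$ cleverly — e.g. take $S$ to contain a block on which $F$ restricts to a single point (weight $0$ or full), forcing the inner sum to be a single nonzero monomial. I expect the main obstacle to be precisely this combinatorial case analysis over the faces of the hypersimplex: organizing the ordered-partition description of faces and, for each type of face, exhibiting a witnessing $(n-k)$-subset $S$ making $\ell_S(z)\ne0$. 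An alternative, possibly cleaner route is to use the $S_n$-symmetry of $\sigma_{d,n}$ together with Proposition \ref{prop:matroid} backwards: show directly that $\Delta^k\sigma_{d,n}$ is a finite morphism onto its image by a degree/fiber-dimension argument, but the toric/combinatorial argument above is the one I would attempt first.
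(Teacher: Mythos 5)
Your setup is correct, and in fact matches the paper's: you correctly identify $B(r_k)\cap\Z^n$ with the vertex set of the hypersimplex $\Delta(d-k,n)$, you correctly note that a basis of the $k$th partials of $\sigma_{d,n}$ is $\{\sigma_{d-k}(x_S)\}$ over $(n-k)$-subsets $S$, and you correctly reduce the lemma to the claim that no point of the toric variety $X_{\Delta(d-k,n)\cap\Z^n}$ is annihilated by all the linear forms $L_S=\sum_{T\subset S,\,|T|=d-k}z_{\chi_T}$. The problem is what comes next. You propose an orbit-by-orbit analysis over the faces of the hypersimplex and suggest that for each face one can choose $S$ so that the restriction of $L_S$ collapses to a single nonzero monomial. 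That heuristic fails already in small cases. Take $n=4$, $d=3$, $k=1$, so the relevant hypersimplex is $\Delta(2,4)$, and consider the facet $F$ cut out by $x_1=1$ (a triangle with vertices $\chi_{\{1,2\}},\chi_{\{1,3\}},\chi_{\{1,4\}}$). Every $(n-k)$-subset $S=\{1,2,3\}$, $\{1,2,4\}$, $\{1,3,4\}$ containing $1$ yields $L_S|_F = t_1(t_i+t_j)$ with two surviving terms, and $L_{\{2,3,4\}}|_F=0$. One cannot make the inner sum a single monomial; one must instead show that the system $t_2+t_3=t_2+t_4=t_3+t_4=0$ has no solution in $(\C^*)^4$. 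The ``combinatorial case analysis over the faces'' that you flag as the main obstacle is therefore not a routine finishing step — it is the entire content of the lemma, and you do not carry it out.

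The paper avoids this case analysis by a reduction. It cites \cite[Lemma~6.4]{expo}, which already establishes that $X_{\Delta_{d-k}\cap\Z^n}$ is disjoint from the common zero set $E'$ of the $n$ linear forms $H_i=\sum_{S\subset[n]\setminus\{i\},\,|S|=d-k}z_S$, $i\in[n]$. It then observes the identity
\[\binom{n+k-1-d}{n-d}\cdot H_i=\sum_{T\subset[n]\setminus\{i\},\,|T|=n-k}L_T,\]
so that the centre $E$ of $\pi_k$ (the common zero set of the $L_T$) is contained in $E'$, and disjointness follows immediately. So your proposal takes a genuinely different route, but as written it only sets up the problem and defers the nontrivial nonvanishing claim. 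To repair it along your lines you would need to actually prove that claim for every face of the hypersimplex (where the single-monomial idea does not suffice), or find the reduction to the $H_i$'s; absent either, there is a real gap.
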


\begin{proof}
 The lattice points of $B(r_k)$ are exactly the points $v\in\R^n$ with $d-k$ entries equal to $1$ and all other entries $0$. Thus $B(r_k)$ is the hypersimplex $\Delta_{d-k}$. We denote $X=X_{\Delta_{d-k}\cap\Z^n}\subset\pp^{\binom{n}{d-k}-1}$ and we label the coordinates on $\pp^{\binom{n}{d-k}-1}$ by $z_S$ for $S\subset[n]$ of size $d-k$. Every $k$th order derivative of $\sigma_{d,n}$ is an elementary symmetric polynomial of degree $d-k$ in the variables indexed by some subset $T\subset[n]$ of size $n-k$. Thus the centre $E$ of $\pi_k$ is the common zero set of all linear forms $L_T=\sum_{S\subset T,\, |S|=d-k} z_S$ for subsets $T\subset[n]$ of size $n-k$. The statement of \cite[Lemma~6.4]{expo} is that $X$ is disjoint from the common zero set $E'$ of all linear forms $H_i=\sum_{S\subset [n]\setminus{i},\, |S|=d-k} z_S$ for $i\in[n]$. We have for all $i\in[n]$: $$\binom{n+k-1-d}{n-d}\cdot H_i=\sum_{T\subset[n]\setminus\{i\},\, |T|=n-k}L_T.$$ This implies $E\subset E'$ and thus $X$ is also disjoint from $E$.
\end{proof}

\begin{proof}[Proof of Theorem \ref{thm:elems}]
 This follows from Lemma \ref{lem:elem} and Proposition \ref{prop:matroid}.
\end{proof}

\begin{rem}
 By Proposition \ref{prop:matroidpolytope}, we have that $\Omega_{\sigma_{d,n}}$ is the smooth toric variety $X_P$ where $P$ is the convex hull of all permutations of $(1,\ldots,d-1,0,\ldots,0)\in\R^n$.
\end{rem}

\begin{rem}
 Fix some $M$-convex set $S\subset(\Z_{\geq0})^n$ and let $V$ be the vector space of polynomials $h$ with $\supp(h)\subseteq S$. Note that there is an integer $d$ such that any $h\in V$ is homogeneous of degree $d$. There are matrices $A^k_h$ whose entries depend linearly on the coefficients of $h$ such that, when $\supp(h)= S$, the centre of $\pi_k$ is the set of all $[x]$ with $A^k_hx=0$. Consider the incidence correspondence $$\Sigma_k=\{([h],[x])\in\pp(V)\times X_{B(r_k)\cap\Z^n}:\, A^k_hx=0\}.$$This is a projective variety. Thus the projection of $\Sigma_k$ onto the first factor is a closed subvariety $Y_k$ of $\pp(V)$. By construction the criterion from Proposition \ref{prop:matroid} applies to a polynomial $h\in V$ with $\supp(h)=S$ if and only if $[h]$ is not contained in any of the $Y_k$. Therefore, depending on $S$, either Proposition \ref{prop:matroid} applies to no polynomial with support $S$, or to a generic such polynomial. We say that $S$ is \emph{torically smoothable} if the latter is the case. The support of the elementary symmetric polynomial $\sigma_{d,n}$ is torically smoothable by Lemma \ref{lem:elem}. Based on  experiments we conjecture that $S$ is torically smoothable at least when $S$ contains the support of $\sigma_{d,n}$. This condition is empty when $d>n$.
\end{rem}

\begin{rem}
 If $h$ has nonnegative coefficients, then we can assume the same for each $D^k_i$. Then the linear projection $\pi_k$ is at least regular on the nonnegative part of $X_{B(r_k)}$ as
 there can be no cancellation of terms. Thus we have at least a regular map on the nonnegative part of $X_{B(\overline{r_1})}$ that maps birationally onto the graph $\Gamma_{h,+}$ of $\nabla h$ restricted to the nonnegative orthant. 
 In general, even when $h$ is stable, we cannot expect $\pi_k$ to be regular on all of $X_{B(r_k)}$. Take for instance the stable polynomial from Example \ref{ex:omegasing}. In this case $B(r_2)$ is the triangular frustum whose vertices are obtained by permuting the last three entries of $(0,2,0,0)$ and $(1,1,0,0)$. Using  {\tt Macaulay2}~\cite{M2} one checks that the centre of $\pi_2$ intersects the toric variety $X_{B(r_2)}$ in a real point of the torus orbit corresponding to the face with vertices $(1,1,0,0),(1,0,1,0)$ and $(1,0,0,1)$.
\end{rem}

\begin{rem}
 Let $h\in\R[x_1,\ldots,x_n]$ be hyperbolic with respect to $e \in\R^n$. 
 In the spirit of the preceding remark one can speculate whether hyperbolicity of $h$ guarantees smoothness of $\Omega_h$ at least at some distinguished subset. To make this more precise let $U\subset\pp^{n-1}$ be the set of all $[p]$ such that $p$ is in the interior of $\Lambda_e(h)$. Since $h$ does not vanish on $U$, the gradient map $\nabla h$ is regular on $U$. We can thus consider the subset $C=\omega_h^{-1}(\nabla h(U))$ of $\Omega_h$. We think it is reasonable to ask whether the Euclidean closure of $C$ contains only smooth points of $\Omega_h$. 
\end{rem}

\section*{Acknowledgements} 
We thank O.~Marigliano, M.~Micha\l{}ek, K.~Ranestad, T.~Seynnaeve, and B.~Sturmfels for coordinating the collaboration project ``Linear Spaces of Symmetric Matrices'' and inspiring the present work. Special thanks go to anonymous referees, whose valuable suggestions significantly improved the quality of the paper.

\vspace{-1.5cm}

\end{document}